    \ifnum\bookmarkget{level}>2 
      \renewcommand*{\numberline}[1]{}%
\newtheorem{theorem}{Theorem}[section]
\newtheorem{lemma}[theorem]{Lemma}
\newtheorem{defn}[theorem]{Definition}
\newtheorem{remark}[theorem]{Remark}
\newtheorem{example}[theorem]{Example}
\newtheorem{exercise}[theorem]{Exercise}
\newtheorem{terminology}[theorem]{Terminology}
\newtheorem{notation}[theorem]{Notation}
\newtheorem{observation}[theorem]{Observation}
\newtheorem{question}[theorem]{Question}
\newenvironment{q}{\begin{question}}{\end{question}}
\def\la{\lambda}
\def\a{\alpha}
\def\vs{\varsigma}
\def\b{\beta}
\def\o{\omega}
\def\vp{\varphi}
\def\s{\sigma}
\def\ep{\epsilon}
\def\L{\Lambda}
\def\G{\Gamma}
\def\O{\mathcal O}
\def\Z{\mathbb Z}
\def\C{\mathbb C}
\def\Q{\mathbb Q}
\def\P{\mathbb P}
\def\w{\wedge}
\def\hphi{\hat{\phi}}
\def\hE{\hat{E}}
\def\hF{\hat{F}}
\def\vac{v_\emptyset}
\def\hb{\hat{\b}}
\def\p{\text{\bf p}}
\def\OP{\text{OP}}
\def\SP{\text{SP}}
\def\PP{\text{P}}
\def\1{{\bm 1}}
\def\f{\text{\bf f}}
\def\0{{\bm 0}}
\def\vac{{v_\emptyset}}
\def\bu{\bullet}
\def\dim{{\rm dim}}
\def\non{\noindent}
\def\Aut{{\rm Aut}}
\def\u{\underline}
\def\lb{\langle}
\def\rb{\rangle}
\def\ds{\displaystyle}
\def\Cliff{{\textsf{Cliff}}}
\def\SS{{\textsf{S}}}
\def\SC{{\textsf{C}}}
\def\f{{\bf f}}
\def\sb{{\scriptscriptstyle{B}}}
\def\reg{\text{reg}}
\title{{\bf A square root of Hurwitz numbers}}
\author{Junho Lee}
\date{\empty}
\begin{document}

\maketitle

\begin{abstract}

We exhibit a generating function of  spin Hurwitz numbers analogous to  (disconnected) double Hurwitz numbers that is a tau function of the two-component BKP (2-BKP)  hierarchy and is a square root of a tau function of the two-component KP (2-KP)  hierarchy defined by related Hurwitz numbers.

\end{abstract}



\section{Introduction}

The Gromov-Witten theory of K\"{a}hler surfaces (with smooth canonical divisor) is entirely determined by the GW theory of spin curves (see \cite{LP1, KiL, MP}).
The (dimension zero) sum formula for spin curves (Theorem~1.1 of \cite{LP2}) indicates that
the spin Hurwitz theory is to the GW theory of spin curves what the Hurwitz theory is to the GW theory of curves.
For this reason, it would be interesting to find some connections between the following theories:
\begin{gather}
\begin{aligned}
\xymatrix{
\ovalbox{\txt{\ GW theory\  \\ of curves}} \ar@{<->}[rr]^-{\text{GW/H}} \ar@{<->}[d]_{}
&& \ovalbox{\txt{ Hurwitz \\ theory }} \ar@{<->}[d]^-{\text{}}
\\
\ovalbox{\txt{GW theory  \\ of spin curves}} \ar@{<->}[rr]^-{}
&& \ovalbox{\txt{spin Hurwitz \\ theory }}
}
\end{aligned}
\label{figure}
\end{gather}
In the top arrow is the celebrated GW/H correspondence  developed in \cite{OP2,OP3}. This paper aims to find a connection in the right arrow  and also to address a question on a correspondence in the bottom arrow analogous to the GW/H correspondence.

In \cite{O}, A. Okounkov showed that a generating function of (disconnected) double Hurwitz numbers is a tau function of the 2-Toda lattice hierarchy.
The key idea is to write the generating function in terms of  Schur functions that are special tau functions of the KP hierarchy.

Following the same idea, we will compare the generating functions of Hurwitz numbers and spin Hurwitz numbers defined below.  The Hurwitz generating function can be written via  Schur functions and the spin Hurwitz generating function via Schur Q-functions that are tau functions of the BKP hierarchy.

Given partitions $\mu,\nu,\eta_2=(2,1^{d-2})$ and $\eta_3=(3,1^{d-3})$ of $d>0$ and integers $r_2,r_3\geq 0$, the Hurwitz number of $\P^1$
is the weighted sum
\begin{equation}\label{Hn}
H^0_d(\mu,\nu,\eta_2^{r_2},\eta_3^{r_3}) =\sum_f \frac{1}{|\Aut(f)|}
\end{equation}
of possibly disconnected ramified covers $f:C\to\P^1$ with ramification profiles $\mu$ over $0\in\P^1$, $\nu$ over $\infty\in\P^1$ and  $\eta_2$ and $\eta_3$ over $r_2$ and $r_3$ other fixed points of $\P^1$.
Here $\eta_2$ denotes the simple ramification and the automorphism group $\Aut(f)$ of $f$ consists of automorphisms $\vs$ of the domain curve $C$ satisfying $f\circ \vs=f$.
The domain Euler characteristic $\chi(C)$ is related to the partition lengths $\ell(\mu)$ and $\ell(\nu)$ and the numbers $r_2$ and $r_3$ by the Riemann-Hurwitz formula
$$
\chi(C)=\ell(\mu)+\ell(\nu)-r_2-2r_3.
$$
In particular, this implies that the Hurwitz number vanishes whenever $r_2$ does not have the same parity
of $\ell(\mu)+\ell(\nu)$.

The spin Hurwitz numbers  of the spin curve $(\P^1,\O(-1))$ also count ramified covers of $\P^1$, but with only odd ramifications and with sign induced from $\O(-1)$.

Specifically, suppose $\rho$ and $\s$ are odd partitions of $d$ (i.e., all parts in $\rho$ and $\s$ are odd) and consider possibly disconnected ramified covers $f:C\to \P^1$ with ramification profiles $\rho$ over $0\in\P^1$, $\s$ over $\infty\in\P^1$ and $\eta_3$ over  other $r$ fixed points of $\P^1$. The Riemann-Hurwitz formula in this case provides the Euler characteristic $\chi(C)$ of the domain as
\begin{equation}\label{RH}
\chi(C) = \ell(\rho)+\ell(\s)-2r.
\end{equation}
Since the ramification divisor $R_f$ of $f:C\to \P^1$ is  even, the twisted pull-back bundle
$$
N_f = f^*\O(-1)\otimes \O_C(\tfrac12 R_f)
$$
is a square root of the canonical bundle of $C$ (or a theta characteristic on $C$).
Given odd partitions $\rho,\s$ of $d$ and the number $r$, the spin Hurwitz number of $(\P^1,\O(-1))$ is defined to be
\begin{equation}\label{sHn}
H^{0,+}_d(\rho,\s,\eta_3^r) = \sum_f \frac{(-1)^{h^0(N_f)}}{|\Aut(f)|},
\end{equation}
where the superscript $+$ denotes the parity of the spin curve $(\P^1,\O(-1))$.

Let $p=(p_1,p_2,\cdots)$ and $p^\prime=(p_1^\prime,p_2^\prime,\cdots)$ be two sets of  variables where $p_n$ and $p_n^\prime$ are power-sum symmetric functions.
For a partition $\mu=(\mu_1,\mu_2,\cdots)$, let
$p_\mu=\prod p_{\mu_i}$. Let $\PP(d)$ be the set of partitions of $d$. Now
introduce a generating function of the Hurwitz numbers (\ref{Hn}) by
\begin{align*}
&\Phi(p,p^\prime,b,q)
\notag \\
=\
&1+\sum_{d>0} q^d \sum_{\mu,\nu\in\PP(d)} p_\mu p^\prime_\nu
\sum_{s=0}^\infty \frac{b^s}{s!} \sum_{\substack{r_i\geq 0, \sum r_i=s}}\frac{s!}{r_1! r_2! r_3!}
\Big( \frac{d^2+d}{2}\Big)^{r_1} H^0_d(\mu,\nu,\eta_2^{r_2},\eta_3^{r_3}).
\end{align*}
Under the restriction $p_2=p_4=\cdots=0$ and $ p^\prime_2=p^\prime_4=\cdots=0$,
the function $\Phi(p,p^\prime,b,q)$ specializes to the function $\Phi(p_\sb,p_\sb^\prime,b,q)$ where
$$
p_\sb=(p_1,0,p_3,0,\cdots)\ \ \ \ \ \text{and}\ \ \ \ \
p^\prime_\sb=(p^\prime_1,0,p_3^\prime,0,\cdots).
$$

Let $\OP(d)$ denote the set of odd partitions of $d$ and introduce a generating function of
the spin Hurwitz numbers (\ref{sHn}) by
\begin{align*}
&\Phi_B(p_\sb,p_\sb^\prime,b,q)
\\
=\ &1+
\sum_{d>0}  q^d \sum_{\rho,\s\in\OP(d)} p_\rho p^\prime_\s
\sum_{s=0}^\infty \frac{b^s}{s!} \sum_{r=0}^s \binom{s}{r} d^{2(s-r)}
2^{-\frac{\chi(C)}{2}} H^{0,+}_d(\rho,\s,\eta_3^r),
\end{align*}
where $\chi(C)$ is the domain Euler characteristic in (\ref{RH}).

\begin{theorem}\label{main}
The function $\Phi(p,p^\prime,b,q)$ is a tau function of the two-component KP (2-KP) hierarchy and
the function $\Phi_B(p_\sb,p_\sb^\prime,b,q)$ is a tau function of the two-component BKP (2-BKP) hierarchy such that
\begin{equation}\label{square}
\Phi^2_B(p_\sb,p_\sb^\prime,b,q) = \Phi(p_\sb,p_\sb^\prime,b,q).
\end{equation}
\end{theorem}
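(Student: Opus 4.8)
The plan is to follow Okounkov's strategy of expressing each generating function in a distinguished orthogonal basis of symmetric functions and then invoking known characterizations of tau functions.

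\medskip

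\noindent\textbf{Step 1: Rewrite $\Phi$ via Schur functions.}
First I would use the classical correspondence between ramified covers and factorizations in the symmetric group together with the Frobenius formula: the disconnected Hurwitz number $H^0_d(\mu,\nu,\eta_2^{r_2},\eta_3^{r_3})$ equals a sum over $\lambda\vdash d$ of $\frac{1}{|C_\mu||C_\nu|}$ times $\chi^\lambda(\mu)\chi^\lambda(\nu)$ times the eigenvalues of the central characters $\f_2(\lambda)$ and $\f_3(\lambda)$ of the conjugacy-class sums of $\eta_2$ and $\eta_3$, raised to the powers $r_2$ and $r_3$. Once the sum over the $r_i$ is carried out, the exponential collapses: the bracketed factors combine into a single exponential $\exp\!\big[b\big(\tfrac{d^2+d}{2}+\f_2(\lambda)+\f_3(\lambda)\big)\big]$. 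The combination $\tfrac{d^2+d}{2}+\f_2(\lambda)$ is precisely the eigenvalue of the cut-and-join / Casimir operator, namely $\sum_i\binom{\lambda_i}{2}+\binom{\lambda_i'}{2}$ up to normalization, and $\f_3(\lambda)$ is the next higher Casimir. Using $p_\mu=\sum_\lambda \chi^\lambda(\mu)s_\lambda$ one gets
\begin{equation*}
\Phi(p,p',b,q)=\sum_{d\geq 0}q^d\sum_{\lambda\vdash d} e^{b\,w(\lambda)}\, s_\lambda(p)\, s_\lambda(p'),
\end{equation*}
with $w(\lambda)$ the explicit Casimir eigenvalue above. This is manifestly of the form $\sum_\lambda c_\lambda s_\lambda(p)s_\lambda(p')$, which is exactly the shape of a 2-KP (equivalently 2-Toda at fixed "time zero") tau function — this is the statement Okounkov uses, and it gives the first assertion of the theorem.

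\medskip

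\noindent\textbf{Step 2: Rewrite $\Phi_B$ via Schur $Q$-functions.}
The analogous input on the spin side is the spin analogue of the Frobenius formula (due to J\'ozefiak, Eskin--Okounkov--Pandharipande, and in the precise form needed here to Gunningham / Lee--Parker): for odd partitions $\rho,\s$, the spin Hurwitz number with $r$ branch points of profile $\eta_3$ is a sum over \emph{strict} partitions $\lambda$ of $d$ of $2^{-\ell(\lambda)}$ (or a similar power of $2$) times $\zeta^\lambda(\rho)\zeta^\lambda(\s)$ times the $r$-th power of the eigenvalue $\f_3^B(\lambda)$ of the relevant central element acting on the spin (Clifford-module) representation. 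After summing over $r$ and using the extra factor $d^{2(s-r)}$ — which supplies the "degree" Casimir piece just as $(\tfrac{d^2+d}{2})^{r_1}$ did above — the sum over $s$ again exponentiates to $\exp[b\,w_B(\lambda)]$ for an explicit $w_B$. The prefactor $2^{-\chi(C)/2}$, via the Riemann--Hurwitz formula \eqref{RH}, becomes $2^{-(\ell(\rho)+\ell(\s))/2}2^{r}$, and the $2^{r}$ combines with the $2^{-r}$ coming from each $\eta_3$-insertion so that everything reorganizes into the known expansion of $p_\rho$ in terms of $Q_\lambda$: one has $p_\rho=\sum_\lambda 2^{-\ell(\lambda)/2}\,\zeta^\lambda(\rho)\,Q_\lambda(p_\sb)$ on odd power sums (up to the standard normalization). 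The upshot should be
\begin{equation*}
\Phi_B(p_\sb,p_\sb',b,q)=\sum_{d\geq 0}q^d\sum_{\lambda\in\DP(d)} e^{b\,w_B(\lambda)}\,Q_\lambda(p_\sb)\,Q_\lambda(p_\sb'),
\end{equation*}
the sum now over strict (distinct-part) partitions. A diagonal sum of products $Q_\lambda(p_\sb)Q_\lambda(p_\sb')$ is precisely the form of a 2-BKP tau function — this is the BKP analogue of Okounkov's observation, and it gives the second assertion.

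\medskip

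\noindent\textbf{Step 3: The square-root identity.}
With both generating functions in diagonal form, \eqref{square} reduces to a purely combinatorial identity. Restricting $\Phi$ to $p_\sb,p_\sb'$ and squaring $\Phi_B$, one must show
\begin{equation*}
\Big(\sum_{d\geq 0}q^d\sum_{\lambda\in\DP(d)}e^{b\,w_B(\lambda)}Q_\lambda(p_\sb)Q_\lambda(p_\sb')\Big)^{2}=\sum_{d\geq 0}q^d\sum_{\mu\vdash d}e^{b\,w(\mu)}s_\mu(p_\sb)s_\mu(p_\sb').
\end{equation*}
The key classical fact is that on odd power sums $Q_\lambda Q_\mu$ expands into Schur functions, and more sharply that $Q_\lambda(p_\sb)^2$, summed appropriately, matches Schur functions of "doubled" shapes; the precise statement needed is the identity (going back to work on projective representations, and exploited by Eskin--Okounkov--Pandharipande and by Lee--Parker in the spin GW setting)
\begin{equation*}
\sum_{\lambda\in\DP}t^{|\lambda|}Q_\lambda(p_\sb)Q_\lambda(p_\sb')\ \text{``}=\text{''}\ \text{a square whose square is}\ \sum_\mu t^{|\mu|}s_\mu(p_\sb)s_\mu(p_\sb'),
\end{equation*}
valid after the odd-variable restriction because both sides are then governed by the same Clifford-algebra / fermionic vacuum expectation. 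Concretely I would argue at the level of neutral fermions: $\Phi_B$ restricted to odd times is a BKP tau function $\langle v|\,e^{H(p_\sb)}\,g\,e^{H(p_\sb')}\,|v\rangle$ for a group element $g=\exp(\text{quadratic in neutral fermions})$, its square is the corresponding \emph{charged}-fermion ($\mathfrak{gl}_\infty$) expectation because the charged Fock space is the tensor square of the neutral one, and that charged expectation is by construction $\Phi$ restricted to odd times. The eigenvalue bookkeeping $w(\mu)=2w_B(\lambda)$ for $\mu$ the doubled diagram of $\lambda$ is a finite check on Casimir eigenvalues.

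\medskip

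\noindent\textbf{Main obstacle.}
The genuinely delicate point is Step 3 — matching the two sides \emph{including all powers of $2$ and the $b$-dependent exponential weights}. The tau-function property in Steps 1--2 is essentially a citation once the diagonalization is done; but the square-root identity requires that (i) the Casimir eigenvalue $w_B(\lambda)$ on the spin side is exactly half the eigenvalue $w(\mu)$ on the ordinary side under $\lambda\mapsto\mu=$ (double of $\lambda$), and (ii) the normalizations of $Q_\lambda$ versus $s_\mu$ on odd power sums conspire correctly with the factor $2^{-\chi(C)/2}$. I expect to handle (i) by a direct comparison of the action of the cut-and-join-type operators on $s_\mu$ and on $Q_\lambda$ — these are classical (the "$p$-operator" of Okounkov--Pandharipande and its spin analogue) — and (ii) by carefully tracking the $2$-powers through the Frobenius formulas, where the factor $2^{-\ell(\lambda)/2}$ in the $Q$-expansion of odd power sums is exactly what is needed to absorb the discrepancy. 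Once these two normalization checks are in place, \eqref{square} follows from the fermionic tensor-square picture with no further computation.
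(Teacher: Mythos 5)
Your proposal follows essentially the same route as the paper: diagonalize $\Phi$ over Schur functions and $\Phi_B$ over Schur $Q$-functions, realize both as fermionic vacuum expectations of diagonal group elements to get the tau-function claims, and prove the square-root identity by viewing the charge-zero charged Fock space as the tensor square of the neutral one. Your ``concrete'' version of Step 3 --- $\Phi_B$ as a neutral-fermion expectation whose square is the corresponding charged-fermion expectation --- is precisely the paper's argument (via $\b(t)+\hb(t)=\a_B(t)$ and the factorization $\lb Z\rb\lb\hat W\rb=\lb Z\hat W\rb$ for $Z,W\in Cl_B^0$).

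One caution about how you propose to finish Step 3. The ``finite check'' you state, namely $w(\mu)=2w_B(\la)$ for $\mu$ the doubled diagram of $\la$, is not the verification that is actually needed and would not suffice: squaring the diagonal sum produces off-diagonal products $Q_\la(p_\sb)Q_\mu(p_\sb^\prime)Q_{\la'}(p_\sb)Q_{\mu'}(p_\sb^\prime)$, and the basis $v_\nu$ of the charged Fock space does not correspond to doubled diagrams under the neutral decomposition. What is really required is the \emph{operator} identity $\big(F^B+\hF^B\big)|_{\L_0}=F$ (together with $\big(E_1^B+\hE_1^B\big)|_{\L_0}=E_1$), which the paper gets from Lemma~\ref{cn-H}, i.e.\ $E^B_n+\hE^B_n=\sum_i\binom{n}{i}2^{-i}E_{n-i}$ on the kernel of the charge operator. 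Verifying this forces the exact coefficients $F=\tfrac13E_3+\tfrac12E_2+\tfrac{11}{12}E_1$ and $F^B=\tfrac13E^B_3+\tfrac23E^B_1$, which in turn come from the expansions $\f_3=\tfrac13\p_3-\tfrac12\p_1^2+\tfrac{5}{12}\p_1$ and $f_3=\tfrac13p_3-p_1^2+\tfrac23p_1$: the insertions $\big(\tfrac{d^2+d}{2}\big)^{r_1}$ and $d^{2(s-r)}$ in the definitions of $\Phi$ and $\Phi_B$ are there precisely to cancel the $\p_1^2$ and $p_1^2$ terms, leaving weights that are \emph{linear} in (shifted or odd) power sums. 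That linearity is also what makes $F$ and $F^B$ quadratic in fermions, i.e.\ legitimate generators of group elements satisfying the bilinear Hirota relations --- so your Step 1--2 assertion that ``any diagonal sum is a tau function'' secretly depends on the same cancellation. With that check substituted for the doubled-diagram one, your plan goes through and coincides with the paper's proof.
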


The proof of this theorem is based on the reduction of the KP hierarchy to the BKP hierarchy.
Both hierarchies are formulated by a single tau function such that
the square of the BKP tau function is a KP tau function
(cf. Proposition~4 of \cite{DJKM-B} and Proposition~1 of \cite{Y}).

In Section 2 we express the generating functions $\Phi$ and $\Phi_B$ via symmetric functions.
In Section 3 the famous Boson-Fermion correspondence converts those expressions to vacuum expectations of corresponding operators. We use the vacuum expectations to prove Theorem~\ref{main} in Section 4.
A discussion on a conjectural spin curve analog of the GW/H correspondence   is presented in Section~\ref{SA}.

\medskip
\non
{\bf Acknowledgement.} The author sincerely thank the referee for comments and suggestions that helped to improve the presentation of this paper.

\section{Symmetric functions}
\label{SF}

With transition matrices between linear bases of algebras relevant to our case, we express the Hurwitz generating function $\Phi$ via Schur functions and shifted symmetric power sums, whereas we express the spin Hurwitz generating function $\Phi_B$  via Schur Q-functions and odd power-sum symmetric functions.

\subsection{Hurwitz numbers}

\subsubsection{Central characters of the symmetric group}

Irreducible representations  and conjugacy classes of the symmetric group $\SS(d)$ on $d$ letters are indexed by partitions of $d$.  Let $C_\mu$ denote the conjugacy class indexed by $\mu\in\PP(d)$. The order of the centralizer of any element of $C_\mu$ is
\begin{equation}\label{oc}
z_\mu = \prod_k \mu(k)!k^{\mu(k)},
\end{equation}
where $\mu(k)$ is the number of parts in $\mu$ equal to $k$.

Let $\pi^\la$ be the irreducible representation indexed by $\la\in\PP(d)$ and $\chi^\la$ be its character.
The class sum $\sum_{g\in C_\mu} g$  acts on $\pi^\la$ as multiplication by constant. This constant is the central character
\begin{equation}\label{CC}
\f_\mu^\la = \frac{|C_\mu|}{\dim\, \pi^\la}\, \chi^\la_\mu,
\end{equation}
where $|C_\mu|=d!/z_\mu$ and $\chi^\la_\mu$ is the value of the character $\chi^\la$ on any element of $C_\mu$.
The character formula for the Hurwitz number (\ref{Hn}) is given as
\begin{equation}\label{CF1}
H^0_d(\mu,\nu,\eta_2^{r_2},\eta_3^{r_3}) = \sum_{\la\in\PP(d)}\,
\frac{\chi^\la_\mu}{z_\mu}\,\frac{\chi^\la_\nu}{z_\nu}\,
\big(\f_{\eta_2}^\la\big)^{r_2}\,\big(\f_{\eta_3}^\la\big)^{r_3}.
\end{equation}

\subsubsection{Shifted symmetric power sums}

The shifted symmetric power sum $\p_n$ of degree $n$ is defined by
$$
\p_n(\la) = \sum_{i=1}^\infty \Big((\la_i-i+\tfrac12)^n - (-i+\tfrac12)^n\Big).
$$
We denote by $\L^*$ the algebra generated by $\p_1,\p_2,\cdots$. From the central characters  of the symmetric group, Kerov and Olshanski \cite{KO} obtained  functions in $\L^*$ as follows. For any partition $\mu=(\mu_1,\cdots,\mu_\ell)$, let  $|\mu|=\sum \mu_i$ and set
$$
\mu\cup (1^k)=(\mu_1,\cdots,\mu_\ell,1,\cdots,1)\in \PP(|\mu|+k).
$$
Define a function $\f_\mu$ on the set of partitions by $\f_\mu(\la)=0$ if $|\la|<|\mu|$ and
\begin{equation*}\label{cha-fct}
\f_\mu(\la) = \binom{\mu(1)+k}{\mu(1)}\f_{\mu\cup (1^k)}^\la\ \ \
\text{if}\ \ \ k=|\la|-|\mu|\geq 0,
\end{equation*}
where $\mu(1)$ is the number of parts in $\mu$ equal to 1 as introduced in (\ref{oc}). Then
\begin{equation*}\label{tri-bases}
\f_\mu = \frac{1}{\prod \mu_i} \p_\mu + \cdots,
\end{equation*}
where $\p_\mu=\prod \p_{\mu_i}$ and the dots denote the lower degree terms (see \cite{KO} and also \cite{OP2}). This implies $\{\f_\mu\}$ and $\{\p_\mu\}$ are mutually triangular linear bases of $\L^*$.
Observe that for $\eta_3=(3,1^{d-3})$ in the character formula (\ref{CF1}),
$$
\f_{(3)}(\la)=\f_{\eta_3}^\la.
$$

The lemma  below is a consequence of
the Wassermann formula \cite{Wa} (see also \cite{IO}).
For a formal series $A(t)$, let
$$
[t^k]\{A(t)\} = \text{the coefficient of $t^k$ in $A(t)$.}
$$

\begin{lemma}\label{f3}
Let $\f_2=\f_{(2)}$ and $\f_3=\f_{(3)}$. We have:
\begin{itemize}
\item[(a)]
$\f_2 = \tfrac12 \p_2$.
\item[(b)]
$\f_3 =\tfrac13 \p_3 - \tfrac12 \p_1^2 + \tfrac{5}{12}\p_1$.
\end{itemize}
\end{lemma}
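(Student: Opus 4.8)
Both identities are obtained by extracting the low--degree part of the Wassermann formula, whose mechanism is the following evaluation of symmetric functions of the Jucys--Murphy elements. Write $n=|\la|$ and let $X_1,\dots,X_n\in\C[\SS(n)]$ be the Jucys--Murphy elements, $X_j=\sum_{i<j}(i\,j)$. Any symmetric function of $X_1,\dots,X_n$ is central, and it acts on the irreducible module $\pi^\la$ as the same symmetric function evaluated at the contents $c(\square)=j-i$ of the boxes $\square=(i,j)\in\la$. The plan is: identify the class sums appearing in $\f_2,\f_3$ with symmetric functions of the $X_j$, read off the resulting scalar as a content sum, and then convert content sums into the shifted symmetric power sums $\p_n$ by summing row by row. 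Note that for $\mu=(2)$ and $\mu=(3)$ there is no part equal to $1$, so the prefactor $\binom{\mu(1)+k}{\mu(1)}$ in the definition of $\f_\mu$ is $1$ and $\f_2(\la)=\f^\la_{(2,1^{n-2})}$, $\f_3(\la)=\f^\la_{(3,1^{n-3})}$.

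For (a): the sum of all transpositions of $\SS(n)$ is exactly $X_1+\cdots+X_n=p_1(X_1,\dots,X_n)$, so it acts on $\pi^\la$ as $\sum_{\square\in\la}c(\square)$, whence $\f_2(\la)=\sum_{\square\in\la}c(\square)$. Expanding $(\la_i-i+\tfrac12)^2-(-i+\tfrac12)^2=\la_i^2-2i\la_i+\la_i$ and summing in $i$ one checks the elementary identity $\sum_{\square\in\la}c(\square)=\sum_i\sum_{j=1}^{\la_i}(j-i)=\tfrac12\p_2(\la)$, which gives (a).

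For (b): expand $p_2(X_1,\dots,X_n)=\sum_j X_j^2=\sum_j\sum_{i<j}\sum_{i'<j}(i\,j)(i'\,j)$. The $\binom n2$ diagonal terms ($i=i'$) contribute the identity, while for each triple $i<i'<j$ the two cross terms $(i\,j)(i'\,j)$ and $(i'\,j)(i\,j)$ are precisely the two $3$-cycles supported on $\{i,i',j\}$; since the largest element of a $3$-element set is uniquely determined, every $3$-cycle of $\SS(n)$ occurs exactly once. Hence $p_2(X_1,\dots,X_n)$ equals $\binom n2$ plus the class sum of all $3$-cycles, and acts on $\pi^\la$ as $\sum_{\square\in\la}c(\square)^2$; therefore $\f_3(\la)=\sum_{\square\in\la}c(\square)^2-\binom n2$. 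Now $\p_1(\la)=|\la|=n$, so $\binom n2=\tfrac12\p_1(\la)^2-\tfrac12\p_1(\la)$, and a row--by--row computation (summing $j^2$, $ij$, $i^2$ over $1\le j\le\la_i$ and comparing with the expansion of $(\la_i-i+\tfrac12)^3-(-i+\tfrac12)^3$) yields $\sum_{\square\in\la}c(\square)^2=\tfrac13\p_3(\la)-\tfrac1{12}\p_1(\la)$. Combining the three pieces gives $\f_3=\tfrac13\p_3-\tfrac12\p_1^2+\bigl(\tfrac12-\tfrac1{12}\bigr)\p_1=\tfrac13\p_3-\tfrac12\p_1^2+\tfrac5{12}\p_1$.

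The only delicate points are bookkeeping ones: on the group-algebra side, checking that the cross terms of $\sum_j X_j^2$ reassemble into the $3$-cycle class sum with multiplicity exactly one; on the symmetric-function side, carrying out the closed-form summation of powers of $j$ and $i$ over the Young diagram without slips. A convenient independent check is to evaluate both sides of (a) and (b) on the partitions $(1),(2),(1^2),(3),(2,1),(1^3)$, which determines every coefficient. Alternatively one may quote the generating-function form of Wassermann's formula as in \cite{IO} and simply extract the coefficients of $z^2$ and $z^3$; the underlying computation is the same.
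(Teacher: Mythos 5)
Your proof is correct, and it takes a genuinely different (and more self-contained) route than the paper. The paper proves (b) by quoting the generating-function form of the Wassermann formula --- specifically (3.2) of \cite{IO} for $\p_3^\#$ --- extracting the coefficient of $t^4$ to get $\p_3^\#=\p_3-\tfrac32\p_1^2+\tfrac54\p_1$, and then dividing by $3$ via the relation $\f_3=\tfrac13\p_3^\#$ coming from (\ref{CC}); part (a) is simply cited as well known. You instead derive both identities from first principles: you identify the class sums of transpositions and of $3$-cycles with $p_1(X_1,\dots,X_n)$ and $p_2(X_1,\dots,X_n)-\binom n2$ of the Jucys--Murphy elements, use the fact that symmetric functions of the $X_j$ act on $\pi^\la$ by evaluation at contents, and then convert content sums into shifted power sums row by row. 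I checked the key computations: the resolution of $\sum_j X_j^2$ into $\binom n2\cdot e$ plus the $3$-cycle class sum with multiplicity one is right, and the identities $\sum_{\square}c(\square)=\tfrac12\p_2$ and $\sum_{\square}c(\square)^2=\tfrac13\p_3-\tfrac1{12}\p_1$ both check out, giving exactly $\tfrac13\p_3-\tfrac12\p_1^2+\tfrac5{12}\p_1$. What your approach buys is transparency and independence from \cite{IO}/\cite{Wa}: each coefficient has a visible combinatorial origin, and (a) and (b) are handled uniformly. What the paper's approach buys is brevity and scalability: the generating function of \cite{IO} packages all $\p_n^\#$ at once, whereas your method would require resolving products of more transpositions into class sums for higher $\f_{(k)}$, which becomes laborious. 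As you note yourself, the two are ultimately the same computation seen from different ends.
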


\begin{proof}
(a) is a well-known fact (cf. \cite{O}), which also follows by the same argument as for (b).
Consider the function $\p_3^\#$ on the set of partitions defined by
$$
\p_3^\#(\la) =
\left\{
\begin{array}{ll}
d^{\downarrow 3}\, {\ds \frac{\chi^\la_{(3,1^{d-3})}}{\dim \,\pi^\la} }, &d:=|\la|\geq 3
\\
0,&d<3,
\end{array}
\right.
$$
where $d^{\downarrow 3}=d(d-1)(d-2)$.
From (3.2) of \cite{IO}, one has
\begin{align*}
\p_3^\# &= [t^4]\Big\{-\tfrac13\prod_{j=1}^3\big(1-(j-\tfrac12)t\big)
\exp\Big(\sum_{j=1}^\infty \frac{\p_jt^j}{j}\big(1-(1-3t)^{-j}\big)\Big)\Big\}
\\
 &= \p_3 -\tfrac32 \p_1^2 +\tfrac54 \p_1.
\end{align*}
This together with (\ref{CC}) proves (b).
\end{proof}

\subsubsection{Schur functions}

The Schur functions $s_\la$ and the monomials
$p_\mu$ are related by
\begin{equation}\label{Schur}
s_\la(p) = \sum_{\mu\in \PP(d)} \frac{\chi^\la_\mu}{z_\mu}\,p_\mu,
\end{equation}
where $\la\in\PP(d)$ (see page 114 of \cite{Mac}).
Now by (\ref{CF1}) and (\ref{Schur}) one can write the generating function $\Phi(p,p^\prime,b,q)$ of Hurwitz numbers as
\begin{equation}\label{G1-S}
\Phi(p,p^\prime,b,q)  = 1+
\sum_{d> 0} \sum_{\la\in \PP(d)}q^d e^{b(\frac12(d+d^2)+\f_2(\la)+\f_3(\la))}
s_\la(p)s_\la(p^\prime).
\end{equation}

\subsection{Spin Hurwitz numbers}

\subsubsection{Central characters of the Sergeev group}
\label{ccsg}

The Sergeev group $\SC(d)$ is the semidirect product $\Cliff(d)\rtimes \SS(d)$, where $\Cliff(d)$
is the Clifford group generated by $\xi_1,\cdots,\xi_d$ and a central element $\ep$ subject to
the relations
$$
\xi_i^2 = 1,\ \ \ \ \ \ep^2 = 1,\ \ \ \ \ \xi_i\xi_j = \ep\xi_j\xi_i\ \ \ (i\ne j)
$$
and the symmetric group $\SS(d)$ acts on $\Cliff(d)$ by permuting the $\xi_i$'s.

Let $\SP(d)$ denote the set of strict partitions $\la=(\la_1>\la_2>\cdots)$ of $d$.
A spin $\SC(d)$-supermodule is a supermodule over the group algebra $\C[\SC(d)]$ on which
$\ep$ acts as multiplication by $-1$ such that
(i) the irreducible spin $\SC(d)$-supermodules $V^\la$ are indexed by strict partitions $\la\in\SP(d)$, (ii)
the character $\zeta^\la$ of $V^\la$ is determined by the values $\zeta^\la_\rho$ on the conjugacy classes $C_\rho$ indexed by odd partitions $\rho\in\OP(d)$, and (iii)
for the conjugacy class $C_\rho$ indexed by $\rho\in\OP(d)$,
$$
|C_\rho| = \frac{|\SC(d)|}{2^{\ell(\rho)+1}z_\rho} = \frac{2^{|\rho|-\ell(\rho)}|\rho|!}{z_\rho},
$$
where $z_\rho$ is introduced in (\ref{oc})
(see \cite{Jo, Se} and also \cite{I2}).
The class sum of $C_\rho$ acts on $V^\la$ as multiplication by a constant,
which is the central character of $V^\la$ given by
\begin{equation}\label{CC-2}
f_\rho^\la = \frac{|C_\rho|}{\dim V^\la}\,\zeta^\la_\rho.
\end{equation}

For $\la\in\SP(d)$, let
$$
\delta(\la) =
\left\{
\begin{array}{ll}
0,\  &\text{if $\ell(\la)$ is even},\\
1,\  &\text{if $\ell(\la)$ is odd}.
\end{array}
\right.
$$
From the Gunningham formula \cite{G,L}, one has the character formula for the spin Hurwitz number (\ref{sHn}):
\begin{equation}\label{CF2}
H^{0,+}_d(\rho,\s,\eta_3^r)
= 2^{\frac{-\ell(\rho)-\ell(\s)-2r}{2}}
\sum_{\la\in\SP(d)} 2^{-\delta(\la)}
\frac{\zeta^\la_\rho}{z_\rho}\frac{\zeta^\la_\s}{z_\s}\big(f_{\eta_3}^\la\big)^r.
\end{equation}

\subsubsection{Odd power-sum symmetric functions}

Denote by $\G$ the algebra generated by  $p_1,p_3,\cdots$.
In the same manner as for $\f_\mu$, one can obtain functions $f_\rho$ in $\G$
from the central characters of the Sergeev group.  For any odd partition $\rho$, let
$f_\rho(\la)=0$ if $|\rho|>|\la|$ and
\begin{equation*}\label{cha-fctB}
f_\rho(\la)=\binom{\rho(1)+k}{\rho(1)}f_{\rho\cup (1^k)}^\la\ \ \
\text{if}\ \ \ k=|\la|-|\rho|\geq 0,
\end{equation*}
where $\rho(1)$ is the number of parts in $\rho$ equal to 1 as in (\ref{oc}). Then
\begin{equation*}\label{tri-bases-B}
f_\rho = \frac{1}{z_\rho} p_\rho + \cdots,
\end{equation*}
where the dots denote the lower degree terms (see Section 6 of \cite{I2}).
It follows that $\{f_\rho\}$ and $\{p_\rho:\text{$\rho$ is odd}\}$ are mutually triangular linear bases of  $\G$.

The lemma below follows from Proposition~3.4 of \cite{I3}.

\begin{lemma}\label{f3-B}
Let $f_3=f_{(3)}$. We have
\begin{equation*}\label{f3-B}
f_3 = \tfrac13 p_3 - p_1^2 +\tfrac23 p_1.
\end{equation*}
\end{lemma}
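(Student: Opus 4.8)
The plan is to mimic the computation carried out for Lemma~\ref{f3}(b), but in the spin setting, replacing the Ivanov--Olshanski expansion for central characters of $\SS(d)$ with its Sergeev-group analog. Concretely, I would first introduce the function $p_3^\#$ on the set of strict partitions defined by $p_3^\#(\la) = d^{\downarrow 3}\,\zeta^\la_{(3,1^{d-3})}/\dim V^\la$ for $d=|\la|\geq 3$ and $p_3^\#(\la)=0$ for $d<3$, the direct counterpart of the $\p_3^\#$ used above. By the definition of the central character $f_\rho^\la$ in (\ref{CC-2}) together with the order formula $|C_\rho| = 2^{|\rho|-\ell(\rho)}|\rho|!/z_\rho$, one has $f_{(3,1^{d-3})}^\la = |C_{(3,1^{d-3})}|\,\zeta^\la_{(3,1^{d-3})}/(z_{(3,1^{d-3})}\dim V^\la)$, so $p_3^\#$ and $f_3=f_{(3)}$ differ only by an explicit combinatorial constant and a shift by lower-length partitions; tracking these constants is routine bookkeeping with (\ref{oc}).

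The substantive input is Proposition~3.4 of \cite{I3}, which supplies a generating-function formula for $p_3^\#$ analogous to the one quoted from \cite{IO} in the proof of Lemma~\ref{f3}: it expresses $p_3^\#$ as a coefficient extraction $[t^k]$ of a product of linear factors times $\exp\!\big(\sum_j c_j(t)\,p_j/j\big)$ for suitable $c_j(t)$. I would substitute this in, extract the relevant coefficient of $t$, and collect terms in $p_1$ and $p_3$ (the even power sums $p_2$ do not appear since we work in $\G$). This yields $p_3^\#$ as a polynomial in $p_1,p_3$ with explicit rational coefficients, and then dividing by the constant identified in the previous paragraph produces $f_3 = \tfrac13 p_3 - p_1^2 + \tfrac23 p_1$.

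The main obstacle is purely organizational rather than conceptual: making sure the normalization conventions in \cite{I3} for Sergeev-group central characters match those fixed in Section~\ref{ccsg} (in particular the powers of $2$ attached to $|C_\rho|$ and the role of $\delta(\la)$), and that the shifted-symmetric-function generators $p_1,p_3,\dots$ are the same ones generating $\G$ here. Once the dictionary between the two normalizations is pinned down, the degree-$3$ coefficient extraction is a short finite computation, exactly parallel to the one displayed for $\f_3$ in Lemma~\ref{f3}, and the stated identity falls out.
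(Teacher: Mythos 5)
Your proposal follows the paper's proof essentially verbatim: introduce the normalized character ratio $p_3^\#$ on strict partitions, invoke Ivanov's generating-function formula (Proposition~3.4 / (3.5) of \cite{I3}) to expand it in odd power sums, and then convert to $f_3$ via the central character formula (\ref{CC-2}) and the order of the conjugacy class. The one point to be careful about — which you correctly flag as the normalization issue — is that the paper's $p_3^\#$ is defined through the functions $X^\la_\mu$, so it carries an extra factor of $4$ relative to your $\zeta^\la/\dim V^\la$ version; once that power of $2$ is tracked, the computation is identical.
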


\begin{proof}
Consider the function $p^\#_3$ on the set of strict partitions defined by
$$
p_3^\#(\la) = \left\{
\begin{array}{ll}
d^{\downarrow 3}\, \dfrac{X^\la_{(3,1^{d-3})}}{X^\la_{(1^d)}}, &d:=|\la|\geq 3,\\
0,&d<3,
\end{array}
\right.
$$
where $X^\la_\mu$ corresponds to $X^\la_\mu(-1)$ in \cite{Mac} such that
$$
X^\la_{(3,1^{d-3})} = 2^{\frac{\ell(\la)-\delta(\la)-2(d-2)}{2}}\zeta^\la_{(3,1^{d-3})},
\ \ \ \ \
X^\la_{(1^d)} = 2^{\frac{\ell(\la)-\delta(\la)-2d}{2}}\dim V^\la
$$
(see Proposition~3.3 of \cite{I2}).
By  (3.5) of \cite{I3}, one has:
\begin{align*}
p^\#_3 &=
[t^4]\Big\{ -\tfrac{1}{6}(1-\tfrac32 t)\prod_{j=1}^2(1-jt)
\exp\Big( \sum_{j=1}^\infty \frac{2p_{2j-1}t^{2j-1}}{2j-1}
\big(1-(1-3t)^{-(2j-1)}\big)\Big)\Big\}
\\
&=
p_3-3p_1^2+2p_1.
\end{align*}
This together with (\ref{CC-2}) shows the lemma.
\end{proof}

\subsubsection{Schur Q-functions}

The algebra $\G$ has another linear basis formed by Schur $Q$-functions $\{Q_\la:\text{$\la$ is strict}\}$.
For $\la\in\SP(d)$,
\begin{equation*}\label{Schur-Q}
Q_\la(p_\sb) = \sum_{\rho\in\OP(d)} 2^{\frac{\ell(\la)-\delta(\la)}{2}}
\frac{\zeta^\la_\rho}{z_\rho} p_\rho
\end{equation*}
(see Corollary~3 of \cite{Jo}). Thus,
for $\frac12p_\sb=(\frac12 p_1,0,\frac12p_3,\cdots)$,
\begin{equation}\label{SQ-half}
Q_\la(\tfrac12p_\sb) = \sum_{\rho\in\OP(d)} 2^{\frac{\ell(\la)-\delta(\la)-2\ell(\rho)}{2}}
\frac{\zeta^\la_\rho}{z_\rho} p_\rho.
\end{equation}
By (\ref{CF2}) and (\ref{SQ-half}), one can write the generating function $\Phi_B(p_\sb,p_\sb^\prime,b,q)$
of spin Hurwitz numbers as
\begin{equation}\label{G3-Q}
\Phi_B(p_\sb,p_\sb^\prime,b,q) = 1+
\sum_{d> 0}  \sum_{\la\in\SP(d)} q^{d} e^{b ( d^2+ f_3(\la))} 2^{-\ell(\la) }
Q_\la(\tfrac12 p_\sb)Q_\la(\tfrac12 p_\sb^\prime).
\end{equation}


\section{The operator formalism}

\label{OF}

The fermion operator formalism is a handy tool for handling various generating functions.
We employ the formalism to express the generating functions (\ref{G1-S}) and (\ref{G3-Q}) as  vacuum expectations. For more detailed discussions of the formalism, we refer to Chapter 14 of \cite{Kac}, Appendix of \cite{O1} and Section 2 of \cite{OP2}.

\subsection{Hurwitz numbers}

\subsubsection{The infinite wedge space}
\label{IW}

Let $V$ be a vector space with basis $\{\u{k}\}$ indexed by the half-integers $k\in \Z+\frac12$.
The infinite wedge space (or fermion Fock space) is the vector space
$$
\Lambda^{\frac\infty2}V
$$
with basis $\{v_S\}$ where $S=(s_1>s_2>s_3>\cdots)$ with $s_i-s_{i+1}=1$ for $i\gg 0$ and
\begin{equation*}\label{basis}
v_S = \u{s_1}\w \u{s_2}\w \u{s_3}\w\cdots.
\end{equation*}

Denote by $\L_0$ the subspace of $\Lambda^{\frac\infty2}V$
generated by vectors $v_\la$ indexed
by partitions $\la=(\la_1,\la_2,\cdots)$. These are defined by
\begin{equation*}\label{vip}
v_\la = \u{\la_1-\tfrac12}\w \u{\la_2-\tfrac32}\w\cdots\w\u{\la_i-i+\tfrac12}\w\cdots.
\end{equation*}
The vector indexed by the empty partition $\emptyset=(0,0,\cdots)$ is the vacuum vector
$$
v_\emptyset = \u{-\tfrac12}\w \u{-\tfrac32}\w \u{-\tfrac52}\w\cdots.
$$

Let $(\cdot\,,\cdot)$ be the inner product on $\Lambda^{\frac\infty2}V$
for which $\{v_S\}$ is an orthonormal basis.
The vacuum expectation of an operator $A$ on $\Lambda^{\frac\infty2}V$ is defined as
$$
\lb A\rb := (Av_\emptyset,v_\emptyset).
$$

\subsubsection{Charged fermions}

For each $k\in\Z+\frac12$, the operator $\psi_k$ on $\Lambda^{\frac\infty2}V$ is defined by
$$
\psi_k\,v = \u{k}\w v,
$$
and the operator $\psi^*_k$ is the adjoint of $\psi_k$.
These are charged fermions and satisfy the canonical anti-commutative relations:
\begin{equation}\label{CAC}
\psi_k\psi^*_\ell+\psi^*_\ell\psi_k = \delta_{k,\ell},\ \ \ \ \
\psi_k\psi_\ell+\psi_\ell\psi_k = \psi_k^*\psi_\ell^*+\psi_\ell^*\psi_k^* = 0.
\end{equation}

Infinite sums of the quadratics $\psi_j\psi^*_j$ make sense as operators on $\Lambda^{\frac\infty2}V$
if we write them in terms of normal ordering defined by
\begin{equation*}\label{no}
:\psi_k\psi_\ell^*: \overset{def}{=} \psi_k\psi_\ell^*-\lb\psi_k\psi_\ell^*\rb =
\left\{
\begin{array}{rl}
\psi_k\psi_\ell^*,\ \ &\ell>0,\\
-\psi_\ell^*\psi_k,\ \ &\ell<0.
\end{array}
\right.
\end{equation*}

\subsubsection{Operators related to shifted symmetric power sums}
\label{OES}

For any integer $n\geq 0$, define
\begin{equation*}\label{mo}
E_n = \sum_{k\in\Z+\frac12} k^n:\psi_k\psi^*_k:.
\end{equation*}
One of the salient features of the half-integer infinite wedge  lies in the relation between the operators $E_n$ and the shifted symmetric power sums $\p_n$:
\begin{equation}\label{KP-eigen}
E_n v_\la = \p_n(\la)v_\la
\end{equation}
(cf. Section~2 of \cite{OP2}).

The operators $E_1$ and $E_0$  are the energy and charge operator.
As $\p_1(\la)=|\la|$, $E_1 v_\la=|\la|v_\la$. On the other hand,
$E_0v_\la=0$ and hence  the subspace $\L_0$
 is the 0-eigenspace of $E_0$ (cf. Appendix of \cite{O1}).

To express the generating function (\ref{G1-S}) of Hurwitz numbers as a vacuum expectation, we consider the following operator
$$
F =  \tfrac13 E_3 +\tfrac12 E_2 +\tfrac{11}{12}E_1.
$$
By Lemma~\ref{f3} and (\ref{KP-eigen}), for every vector $v_\la$ with $|\la|=d$ one has
\begin{equation}\label{f3-op}
F v_\la = \Big(\tfrac12\big(d+d^2\big)+\f_2(\la)+\f_3(\la)\Big)v_\la.
\end{equation}

As shown in \cite{O}, it is convenient to use the operator $q^{E_1}=e^{E_1\ln{q}}$  because it can be written as
$$
q^{E_1} =\sum_{d\geq 0} P_d\,q^d,
$$
where $P_d$ is the orthogonal projection onto the $d$-eigenspace of $E_1$.

\subsubsection{The Boson-Fermion correspondence}

Introduce a set $t=(t_1,t_2,\cdots)$ of Miwa variables $t_n=p_n/n$ and let $\a^*(t)$ denote the adjoint of the operator
$$
\a(t) = \sum_{n>0} t_n \sum_{k\in\Z+\frac12} :\psi_{k-n}\psi^*_k:.
$$
Then from the remarkable Boson-Fermion correspondence (cf. $\S$14.10 of \cite{Kac}), one has
\begin{equation*}\label{BFC}
e^{\a^*(t)}v_\emptyset = \sum_\la s_\la(p)v_\la,
\end{equation*}
where the sum is over all partitions $\la$. This together with (\ref{G1-S}) and (\ref{f3-op}) gives
\begin{equation}\label{G1-VE}
\Phi(p,p^\prime,b,q) =
\left\lb e^{\a(t)} q^{E_1} e^{bF}e^{\a^*(t^\prime)}\right\rb,
\end{equation}
where $t^\prime=(t_1^\prime,t_2^\prime,\cdots)$ is another set of Miwa variables  $t^\prime_n=p_n^\prime/n$.

\subsection{Spin Hurwitz numbers}

\subsubsection{Neutral fermions}

There is an involution $w$ on the charged fermions defined by
$$
w(\psi_k) = (-1)^{k+\frac12}\psi^*_{-k-1},\ \ \ \ \
w(\psi_k^*) = (-1)^{k+\frac12}\psi_{-k-1}.
$$
The neutral  fermions are defined as $\pm 1$-eigenvectors of the involution.

\begin{defn}\label{neutral fermion}
For $m\in\Z$ and $i=\sqrt{-1}$, let
\begin{equation*}\label{neutral fermion}
\phi_m = \frac{1}{\sqrt{2}}\Big(\psi_{m-\frac12}+ (-1)^m\psi^*_{-m-\frac12}\Big),
\ \ \ \ \
\hphi_m = \frac{i}{\sqrt{2}}\Big(\psi_{m-\frac12}- (-1)^m\psi^*_{-m-\frac12}\Big).
\end{equation*}
\end{defn}

By the canonical anti-commutative relations (\ref{CAC}) for charged fermions, the neutral fermions also satisfy the canonical anti-commutative relations:
\begin{equation}\label{CACR}
\phi_n\phi_m+\phi_m\phi_n = \hphi_m\hphi_n+\hphi_n\hphi_m = (-1)^m\delta_{m,-n},
\ \ \ \phi_m\hphi_n+\hphi_n\phi_m = 0.
\end{equation}

With the relations (\ref{CAC}), the charged fermions $\psi_i$ and $\psi_j^*$ generate a Clifford algebra, denoted $Cl$.
The neutral fermions $\phi_m$ and $\hphi_m$ respectively generate isomorphic subalgebras. The isomorphism is given by the involution on the Clifford algebra $Cl$
induced from the map $\phi_m \leftrightarrow \hphi_m$, which we denote by
\begin{equation}\label{iso}
X\ \mapsto\  \hat{X}.
\end{equation}

Let $Cl_B$ be the subalgebra of $Cl$ generated by $\phi_m$'s. There is a decomposition
$$
Cl_B = Cl_B^0\oplus Cl_B^1,
$$
where $Cl_B^p$ is spanned by all products of the form
$\phi_{m_1}\cdots\phi_{m_s}$  with $s\equiv p$ (mod 2).

Recall that the subspace $\L_0$ defined in Section~\ref{IW} has an orthonormal basis  $\{v_\la\}$. Its neutral analog is the subspace
\begin{equation}\label{nfs}
\text{span}\{Xv_\emptyset:X\in Cl_B^0\}.
\end{equation}
As $\phi_m\vac=0$ for $m<0$, this subspace is also spanned by vectors
$v^B_\la$ indexed by  strict partitions $\la=(\la_1>\cdots>\la_\ell)$, which are defined by
$$
v^B_\la = \left\{
\begin{array}{cl}
\phi_{\la_1}\cdots\phi_{\la_\ell}v_\emptyset
&\text{if $\ell$ is even,}\\
\sqrt{2}\phi_{\la_1}\cdots\phi_{\la_\ell}\phi_0v_\emptyset
&\text{if $\ell$ is odd.}
\end{array}
\right.
$$
As $\phi_m^*=(-1)^m\phi_{-m}$ and $\phi_0^2=\frac12$,
$\{v^B_\la\}$ is an orthonormal basis of  the subspace (\ref{nfs}).

\subsubsection{Operators related to odd power-sum symmetric functions}
\label{OEFO}

For any odd integer $n\geq 1$, define a neutral analog of the operator $E_n$ by
\begin{equation*}\label{mo-B}
E^B_n = \sum_{m>0}(-1)^m m^n \phi_m\phi_{-m}.
\end{equation*}
From the canonical anti-commutative relations (\ref{CACR}), it is  easy to see  that
$$
(-1)^m\phi_m\phi_{-m}v^B_\la = \left\{
\begin{array}{cl}
v_\la^B\ &\text{if $m=\la_j$ for some $j$,}\\
0\ &\text{otherwise.}
\end{array}\right.
$$
This implies  $v^B_\la$ is an eigenvector of $E^B_n$ with eigenvalue $p_n(\la)$, that is,
\begin{equation}\label{ef-B}
E^B_n v^B_\la   =   p_n(\la)v^B_\la.
\end{equation}
As $p_1(\la)=|\la|$, the operator $E_1^B$ plays the same role as the energy operator $E_1$.

The following lemma together with (\ref{KP-eigen}) and (\ref{ef-B}) makes a connection between odd power-sum symmetric functions and shifted symmetric power sums via neutral and charged fermions.

\begin{lemma}\label{cn-H}
For any odd integer $n\geq 1$,
\begin{equation*}\label{E:cn-H}
E^B_n+\hE^B_n =
\sum_{i=0}^n \binom{n}{i}\frac{E_{n-i}}{2^{i}},
\end{equation*}
where the operator $\hE^B_n$ is  defined by the isomorphism (\ref{iso}) in an obvious way.
\end{lemma}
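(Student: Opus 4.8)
The plan is to express everything in terms of charged fermions and reduce the identity to a statement about the bilinear expressions $\sum_k k^n :\psi_k\psi_k^*:$. First I would substitute the definitions of $\phi_m$ and $\hphi_m$ from Definition~\ref{neutral fermion} into $(-1)^m m^n\big(\phi_m\phi_{-m}+\hphi_m\hphi_{-m}\big)$ and expand. The cross terms $\psi_{m-\frac12}\psi_{-m-\frac12}$ (and their starred analogs) come with opposite signs in $\phi_m\phi_{-m}$ versus $\hphi_m\hphi_{-m}$ because of the factor $i^2=-1$, so they cancel, leaving only the "diagonal" quadratics: roughly $\tfrac12\psi_{m-\frac12}\psi^*_{-(-m)-\frac12} + \tfrac12(-1)^{2m}\psi^*_{-m-\frac12}\psi_{m-\frac12}$, i.e. $\tfrac12\big(\psi_{m-\frac12}\psi^*_{m-\frac12}+\psi^*_{-m-\frac12}\psi_{-m-\frac12}\big)$ up to signs coming from the $(-1)^m$ factors in the anti-commutation relations (\ref{CACR}). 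After this cancellation the sum over $m>0$ becomes a sum over \emph{all} half-integers $k=m-\tfrac12$ (with $m>0$ giving $k>0$ and the starred term reindexed as $k=-m-\tfrac12<0$), and one recognizes $\sum_{k\in\Z+\frac12}(k+\tfrac12)^n:\psi_k\psi^*_k:$ once the constant (vacuum) shift is accounted for — here one must be careful that $E^B_n+\hE^B_n$ is manifestly normally ordered while $E_n$ is defined with normal ordering, so the vacuum constants must be tracked and shown to match (both annihilate the vacuum if $n\geq 1$, which pins them down).

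The second step is purely algebraic: having arrived at $E^B_n+\hE^B_n=\sum_{k\in\Z+\frac12}(k+\tfrac12)^n:\psi_k\psi^*_k:$, I expand $(k+\tfrac12)^n=\sum_{i=0}^n\binom{n}{i}k^{n-i}2^{-i}$ by the binomial theorem and interchange the (finite) sum over $i$ with the sum over $k$, giving $\sum_{i=0}^n\binom{n}{i}2^{-i}\sum_{k}k^{n-i}:\psi_k\psi^*_k:=\sum_{i=0}^n\binom{n}{i}2^{-i}E_{n-i}$, which is exactly the claimed formula. A subtle point is that the reindexing $k=-m-\tfrac12$ sends $(k+\tfrac12)^n=(-m)^n=-m^n$ for $n$ odd, and simultaneously $:\psi^*_{-m-\frac12}\psi_{-m-\frac12}:=-:\psi_{-m-\frac12}\psi^*_{-m-\frac12}:$ picks up another sign from the normal-ordering convention, so the two copies (from $k>0$ and $k<0$) add rather than cancel — this is where the oddness of $n$ is genuinely used, and it is worth a sentence to note that for even $n$ the analogous combination would instead vanish.

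The main obstacle I anticipate is bookkeeping: getting all the signs right simultaneously in (i) the $i^2=-1$ that kills the cross terms, (ii) the $(-1)^m$ factors appearing in $E^B_n$, in the anti-commutators (\ref{CACR}), and in the involution $w$, and (iii) the normal-ordering sign flip when $k<0$. A clean way to organize this is to verify the operator identity on the orthonormal basis $\{v_\la^B\}$ using the eigenvalue relations (\ref{ef-B}) and (\ref{KP-eigen}): the left side acts on $v_\la^B$ as $p_n(\la)+\widehat{p}_n(\la)$, but since $v_\la^B$ lies in the neutral Fock space and the hatted operator acts there with the same eigenvalue structure, one reduces to checking $2p_n(\la)=\sum_i\binom{n}{i}2^{-i}\p_{n-i}(\la)$ as a numerical identity. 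Actually the cleanest route is probably to prove the operator identity directly by the expansion above and \emph{then} read off the eigenvalue identity as a corollary; either way the content is the elementary shift $k\mapsto k+\tfrac12$ relating half-integer and integer power sums, dressed up in fermions. I would present the direct operator computation as the proof, since it makes the structural reason — $\phi,\hphi$ together reconstitute $\psi,\psi^*$ — transparent.
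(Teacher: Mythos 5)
Your proposal is correct and follows essentially the same route as the paper's proof: expand $(-1)^m m^n(\phi_m\phi_{-m}+\hphi_m\hphi_{-m})$ via Definition~\ref{neutral fermion} to get $\psi_k\psi^*_k+\psi^*_{-k-1}\psi_{-k-1}$ with $k=m-\tfrac12$, reindex to obtain $\sum_{k\in\Z+\frac12}(k+\tfrac12)^n:\psi_k\psi^*_k:$ (using oddness of $n$ and the normal-ordering sign for $k<0$ exactly as you describe), and finish with the binomial theorem. The sign bookkeeping you flag as the main obstacle is precisely the content of the paper's one-line justification of its third equality, so nothing further is needed.
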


\begin{proof}
For $m>0$, let $k=m-\frac12$. Then by Definition~\ref{neutral fermion}, we have
\begin{align*}
(-1)^m\big(\phi_m\phi_{-m} +\hphi_m\hphi_{-m}\big)=
\psi_k\psi^*_k + \psi^*_{-k-1}\psi_{-k-1}.
\end{align*}
From this, we obtain
\begin{align*}
E^B_n+\hE^B_n
&=\sum_{k>0}\big(k+\tfrac12\big)^n \psi_k\psi_k^* +
\sum_{k>0}\big(k+\tfrac12\big)^n\psi^*_{-k-1}\psi_{-k-1}
\\
&= \sum_{k>0}\big(k+\tfrac12\big)^n \psi_k\psi_k^* +
\sum_{\ell<-1}\big(-\ell-\tfrac12)^n\psi_\ell^*\psi_\ell
\\
&=\sum_{k\in\Z+\frac12} \big(k+\tfrac12)^n:\psi_k\psi^*_k:
\ =\sum_{i=0}^n \binom{n}{i}\frac{E_{n-i}}{2^{i}},
\end{align*}
where the third equality follows because $n$ is odd and $-\psi^*_\ell\psi_\ell=\ :\psi_\ell\psi^*_\ell:$
for $\ell<0$.
\end{proof}

To express the generating function (\ref{G3-Q}) of spin Hurwitz numbers
as a vacuum expectation, consider the following operator
$$
F^B= \tfrac13 E^B_3 +\tfrac23 E^B_1.
$$
It follows from  Lemma~\ref{f3-B} that  for every vector $v^B_\la$ with $|\la|=d$,
\begin{equation}\label{fb3-op}
F^Bv^B_\la =  \big(d^2 +f_3(\la)\big)v^B_\la.
\end{equation}

\subsubsection{The neutral Boson-Fermion correspondence}

Let $\b^*(t)$ be the adjoint of the operator
\begin{align*}
\b(t) = \frac12 \sum_{n\geq 0}t_{2n+1}\sum_{m\in \Z} (-1)^{m+1}\phi_m\phi_{-m-2n-1}.
\end{align*}
Then from the neutral Boson-Fermion correspondence, one has
\begin{equation*}\label{BFC-B}
e^{\b^*(t)}\vac =
\sum_{\la} 2^{-\frac{\ell(\la) }{2}}Q_\la\big(\tfrac12 p_\sb\big)
v_\la^B,
\end{equation*}
where the sums are over all strict partitions $\la$ (see \cite{Y}).
This together with (\ref{G3-Q}) and (\ref{fb3-op}) yields:
\begin{equation}\label{G3-VE}
\Phi_B(p_\sb,p_\sb^\prime,b,q)
 =
\left\lb
e^{\b(t)}q^{E^B_1} e^{b   F^B}e^{\b^*(t^\prime)}
\right\rb
=
\left\lb
e^{\hb(t)}q^{\hE^B_1} e^{b   \hF^B}e^{\hb^*(t^\prime)}
\right\rb,
\end{equation}
where the second vacuum expectation is given by the isomorphism (\ref{iso}).

\section{Square root}
\label{SR}
Integrable hierarchies of KP type (including the 2-Toda lattice hierarchy) have fermionic forms of Hirota equations and fermionic formulas for tau functions (see \cite{AZ} and also Appendix of \cite{OST}). We apply those in the proof of Theorem~\ref{main} below.

\medskip
\non
{\bf Proof of Theorem~\ref{main}:}
The operators $q^{E_1}e^{bF}$ and $q^{E^B_1}e^{bF^B}$
satisfy the following fermionic forms of Hirota equations:
\begin{align}
\Big[q^{E_1}e^{bF}\otimes q^{E_1}e^{bF}&,
\sum_{k\in \Z+\frac12} \psi_k\otimes\psi^*_k\Big] = 0,
\label{HE1}
\\
\Big[q^{E^B_1}e^{bF^B}\otimes q^{E^B_1}e^{bF^B},
&\sum_{m\in \Z} (-1)^m\phi_m\otimes \phi_{-m}\Big] = 0.
\label{HE2}
\end{align}
One can obtain these commutation relations from: for $n\geq 0$,
$$
E_n\psi_k = \psi_k(E_n+k^n),\ \ \ \
E_n\psi^*_k = \psi_k^*(E_n-k^n),
$$
and for odd  $n\geq 1$,
$$
E^B_{n}\phi_{\pm m} = \phi_{\pm m}(E^B_{n}\pm m^{n}).
$$
It follows from (\ref{HE1}) that
the sequence
\begin{equation*}\label{TL-seq}
\tau_n(t,t^\prime) = \Big(e^{\a(t)}q^{E_1}e^{bF} e^{\a^*(t^\prime)}v_n,v_n\Big)\ \ \  (n\in\Z)
\end{equation*}
is a sequence of tau functions of the 2-Toda lattice hierarchy where
$v_n$ is the vacuum vector in the $n$-eigenspace of the charge operator $E_0$ defined by
$$
v_n = \u{n-\tfrac12}\w \u{n-\tfrac32}\w\u{n-\tfrac52}\w\cdots
$$
(see Appendix of \cite{O1}).
The Hurwitz generating function $\Phi(p,p^\prime,b,q)=\tau_0(t,t^\prime)$ by (\ref{G1-VE}) and
$\tau_0(t,t^\prime)$ is a tau function of the 2-KP hierarchy (see Theorem 1.12 of \cite{UT}).

It also follows from (\ref{HE2}) that the vacuum expectation in (\ref{G3-VE}),
$$
\Phi_B(p_\sb,p_\sb^\prime,b,q) =
\left\lb e^{\b(t)}q^{E^B_1} e^{b   F^B}e^{\b^*(t^\prime)}\right\rb,
$$
is a tau function of the 2-BKP hierarchy in time variables $t$ and $t^\prime$
 (see \cite{T}).

Now it remains to prove (\ref{square}).
By (\ref{G1-VE}) we have
\begin{equation*}\label{G2-VE}
\Phi(p_\sb,p_\sb^\prime,b,q) =
\left\lb e^{\a_B(t)} q^{E_1} e^{bF}e^{\a_B^*(t^\prime)}\right\rb,
\end{equation*}
where $\a_B(t)  =  \a(t_1,0,t_3,0,\cdots)$.  By definition, one has
$$
\b(t)+\hb(t)=\a_B(t).
$$
Since $\L_0$ is the kernel of the charge operator $E_0$,  Lemma~\ref{cn-H} shows that
$$
\big(E_1^B+\hE^B_1\big)|_{\L_0}=E_1,\ \ \
\big(F^B+ \hF^B\big)|_{\L_0}=F.
$$
Therefore noting $\lb Z\rb\lb \hat{W}\rb=\lb Z\hat{W}\rb$
for $Z,W\in Cl_B^0$ (see Lemma~1 of \cite{Y}),
we conclude
\begin{align*}
\Phi^2_B(p_\sb,p_\sb^\prime,b,q) &=
\left\lb
e^{\b(t)}q^{E^B_1} e^{bF^B}e^{\b^*(t^\prime)}
e^{\hb(t)}q^{\hE^B_1} e^{b\hF^B}e^{\hb^*(t^\prime)}
\right\rb
\\
&=
\left\lb
e^{\b(t)+\hb(t)}q^{E^B_1+\hE^B_1} e^{b   (F^B+\hF^B)}e^{\b^*(t^\prime)+\hb^*(t^\prime)}
\right\rb
 = \Phi(p_\sb,p_\sb^\prime,b,q),
\end{align*}
where the second equality follows since $Z\hat{W}=\hat{W}Z$ for $Z,W\in Cl_B^0$.
This completes the proof of Theorem~\ref{main}.

\section{Conjectural spin GW/H correspondence}
\label{SA}

In \cite{OP2}, the GW/H correspondence was defined by the two linear bases $\{\p_\mu\}$ and $\{\f_\mu\}$ of the algebra $\L^*$.
Let $\PP_\Q$ denote the vector space over $\Q$ with  basis the set of partitions (i.e., every vector in $\PP_\Q$ is a formal linear combination of partitions).
As $\{\f_\mu\}$ is a linear basis of $\L^*$, there is
a linear isomorphism
$$
\vp:\PP_\Q\to \L^*
$$
given by $\mu\mapsto \f_\mu$.
Using this isomorphism, one can extend the character formula  for Hurwitz numbers
of the curve of genus $h$ to the following multilinear form on $\PP_\Q$:
$$
H^h_d(v_1,\cdots,v_n)\ =\ \sum_{\la\in\PP(d)}\Big(\frac{\dim \,\pi^\la}{d!}\Big)^{2-2h}
\prod_{i=1}^n \vp(v_i)(\la).
$$
The GW/H correspondence is then the following equality:
$$
\Big\lb  \tau_{k_1}(\o)\cdots \tau_{k_n}(\o)  \Big\rb_d^{\bu\,h} =
H^h_d\left( \vp^{-1}\Big(\frac{\p^{\reg}_{k_1+1}}{(k_1+1)!}\Big),\cdots,
\vp^{-1}\Big(\frac{\p^{\reg}_{k_n+1}}{(k_n+1)!}\Big)\right).
$$
Here the left-hand side is the degree $d$ descendent GW invariants of the curve of genus $h$, $\o$ is the Poincar\'{e} dual of the point class, $\bullet$ denotes the disconnected theory and
$\p^{\reg}_n$ is the regularized shifted symmetric power sum defined by
$$
\p^{\reg}_n=\p_n+(1-2^{-n})\zeta(-n),
$$
where $\zeta$ is the Riemann zeta function \cite{OP2}.

One may ask whether interplays between the following functions yield connections between theories in (\ref{figure}):
\begin{gather*}
\begin{aligned}
\xymatrix{
\{\p_\mu\} \ar@{<->}[rr]^-{\text{GW/H}} \ar@{<->}[d]_{}
&& \{\f_\mu\}\ar@{<->}[d]^-{\text{Theorem~\ref{main}}}
\\
\{p_\rho\} \ar@{<->}[rr]_-{}
&& \{f_\rho\}
}
\end{aligned}
\end{gather*}

We will use the two linear bases $\{p_\rho:\text{$\rho$ is odd}\}$ and $\{f_\rho\}$  of the algebra  $\G$ to describe a conjectural spin curve analog of the GW/H correspondence.
Let $\OP_\Q$ denote the vector space over $\Q$ with basis the set of odd partitions.
There is a linear isomorphism $\vp_\sb:\OP_\Q\to \G$ given by
$$
\vp_\sb(\rho) = 2^{\frac{\ell(\rho)-|\rho|}{2}}f_\rho.
$$
Using this isomorphism, one can also extend the Gunningham formula \cite{G,L} for spin Hurwitz numbers of the spin curve of genus $h$ and parity $p$ to the following multilinear form on $\OP_\Q$:
\begin{align*}
&H^{h,p}_d(w_1,\cdots,w_n)
\\
=\
&2^{(d+1)(1-h)} \sum_{\la\in {\rm SP(d)} }
(-1)^{p\delta(\la)}
\Big( 2^{\frac{1-\delta(\la)}{2}}\, \frac{\dim V^\la}{|\SC(d)|} \Big)^{2-2h}
\prod_{i=1}^n \vp_\sb(w_i)(\la).
\end{align*}
Then the Maulik-Pandharipande formulae ((8) and (9) of \cite{MP}), which were proved in
\cite{KiL2,L0,KT}, show that for degree $d=1,2$,
the descendent GW invariants of the spin curve of genus $h$ and parity $p$ are given by
\begin{gather}\label{spinGH}
\begin{aligned}
&\Big\lb \tau_{k_1}(\o)\cdots\tau_{k_n}(\o)   \Big\rb_d^{\bu\,h,p}
\\
=\ &
H^{h,p}_d\left( \vp_\sb^{-1}\Big(\frac{ (-1)^{k_1}k_1!}{2^{k_1}(2k_1+1)!}\  p_{2k_1+1}\Big),
\cdots, \vp_\sb^{-1}\Big(\frac{ (-1)^{k_n}k_n!}{2^{k_n}(2k_n+1)!}\  p_{2k_n+1}\Big)\right).
\end{aligned}
\end{gather}

\begin{q}
Does the equality (\ref{spinGH}) hold for $d\geq 3$?
\end{q}

\noindent {\em  Department of  Mathematics,  University of Central Florida, Orlando, FL 32816}

\noindent {\em e-mail:}\ \ {\ttfamily junho.lee@ucf.edu}


\end{document}